\newtheorem{theorem}{Theorem}[section]
\newtheorem{lemma}[theorem]{Lemma}
\newtheorem{example}[theorem]{Example}
\newtheorem{prop}[theorem]{Proposition}
\newtheorem{cor}[theorem]{Corollary}
\newtheorem{remark}[theorem]{Remark}
\numberwithin{equation}{section}
\newcommand{\coker}{\mathop{\mathrm{Coker}}\nolimits}
\newcommand{\abs}[1]{\lvert#1\rvert}
\newcommand{\Z}{{\mathbb Z}}
\newcommand{\spec}{\mathop{\mathrm{Spec}}\nolimits}
\newcommand{\rank}{\mathop{\mathrm{rank}}\nolimits}
\newcommand{\proj}{\mathop{\mathrm{Proj}}\nolimits}
\renewcommand{\Z}{\mathbb Z}
\newcommand{\longto}{\longrightarrow}
\renewcommand{\l}{\ell}
\newcommand{\8}{{\infty}}
\newcommand{\T}{{\lceil}}
\newcommand{\4}{{\lfloor}}
\newcommand{\7}{{\rceil}}
\newcommand{\3}{{\rfloor}}
\newcommand{\F}{\mathcal{F}}
\renewcommand{\O}{\mathcal{O}}
\renewcommand{\S}{\mathcal{S}}
\newcommand{\V}{\mathcal{V}}
\newcommand{\W}{\mathcal{W}}
\newcommand{\syz}{\mathop{\mathrm{Syz}}\nolimits}
\renewcommand{\le}{\leqslant}
\newcommand{\lra}{\longrightarrow}
\renewcommand{\l}{\ell}
\newcommand{\HH}{\mathrm{H}}
\newcommand{\h}{\mathrm{h}}
\begin{document}

\title[A Direct Limit for Limit Hilbert-Kunz Multiplicity]{A Direct Limit for Limit Hilbert-Kunz Multiplicity for Smooth Projective Curves}

\author{Holger Brenner}
\address{Fachbereich f\"ur Mathematik und Informatik, Universit\"at Osnabr\"uck, Osnabr\"uck, Germany}
\email{hbrenner@uni-osnabrueck.de}

\author{Jinjia Li}
\address{Mathematics Department, Syracuse University, Syracuse, NY 13224, USA}
\curraddr{Department of Mathematics, University of Louisville, Louisville, KY 40292,  USA} 
\email{jinjia.li@louisville.edu}

\thanks{The third author was supported in part by NSA Grant \#H98230-08-1-0025.}
\author{Claudia Miller}
\address{Mathematics Department, Syracuse University, Syracuse, NY 13224, USA} \email{clamille@syr.edu }

\date{\today}

\begin{abstract}
This paper concerns the question of whether a more direct limit can be used to obtain the limit Hilbert-Kunz multiplicity, a possible candidate for a characteristic zero Hilbert-Kunz multiplicity. The main goal is to establish an affirmative answer for one of the main cases for which the limit Hilbert-Kunz multiplicity is even known to exist, namely that of graded ideals in the homogeneous coordinate ring of smooth projective curves. The proof involves more careful estimates of bounds found independently by Brenner and Trivedi on the dimensions of the cohomologies of twists of the syzygy bundle as the characteristic $p$ goes to infinity and uses asymptotic results of Trivedi on the slopes of Harder-Narasimham filtrations of Frobenius pullbacks of bundles. In view of unpublished results of Gessel and Monsky, the case of maximal ideals in diagonal hypersurfaces is also discussed in depth.
\end{abstract}

\keywords{Hilbert-Kunz multiplicity, Harder-Narasimhan filtrations}

\subjclass{primary 13D40, 14H60.}

\maketitle

\section*{Introduction}

In 1983, following Kunz's lead in \cite{Ku}, 
Monsky defined in  \cite{Mo-original} a new multiplicity in positive characteristic 
-- the Hilbert-Kunz (HK) multiplicity -- as follows: Let $R$ be a
ring of characteristic $p>0$ and $I=(f_1, \dots, f_s)$ an ideal with
the length $\ell(R/I)$ finite. Consider the Frobenius powers
$I^{[p^n]}=(f_1^{p^n}, \dots, f_s^{p^n})$ of $I$ and define
\[
e_{\textrm{HK}}(I,R) = \lim_{n\to\8} \frac{\ell(R/I^{[p^n]})}{(p^n)^{\dim (R)}}
\]
Just like the usual Hilbert-Samuel multiplicity, this new multiplicity 
seems to measure the degree of singularity at a point on a variety. 
Furthermore, it plays the role for tight closure that ordinary
Hilbert-Samuel multiplicity plays for integral closure. 
But the numbers seem much more complex (they are
usually not integers and possibly not always rational or even algebraic) than usual
multiplicities (which are integers) and, despite intense study in
recent years, are still not well understood or even computable except
in a few cases. 

However what little is known seems to indicate that
the numbers may get simpler in the limit as the characteristic $p$
goes to infinity, leading to the question of whether a characteristic
zero HK multiplicity defined in such a way could have a more
transparent meaning or behavior than the one in characteristic
$p$ does.
More precisely, if $R$ is a $\mathbb Z$-algebra essentially of finite type over $\mathbb Z$ 
and $I$ an ideal, let 
$R_p$ be the reduction of $R$ mod $p$ and $I_p$ the extended ideal. 
If $\ell(R_p/I_p)$ is finite and nonzero for almost all $p$, define 
\[
e_{\textrm{HK}}^{\8}(I,R) 
\stackrel{\textrm{def}}{=}  
 \lim_{p\to\8} e_{\textrm{HK}}(I_p,R_p) 
\]
whenever this limit exists, and call it the {\it limit Hilbert-Kunz multiplicity} of $I$.  

Although experimental results indicate this limit might 
always exist, very few cases have been established.  
It is, of course, clear when $e_{\textrm{HK}}(I_p,R_p)$ 
is constant for almost all $p$, such as for the homogeneous maximal ideal 
in the coordinate rings of plane cubics \cite{BC}, \cite{Mo-cubic}, \cite{P}, 
in certain monomial ideals \cite{Bruns}, \cite{Co}, \cite{E}, \cite{W}, 
in two-dimensional invariant rings under finite group actions \cite{WY}, 
and for full flag varieties and for elliptic curves embedded by complete linear systems \cite{FT} 
(see also \cite{BH}).   
That this is also the case for ideals of finite projective dimension
can be seen via local Riemann-Roch theory (private communication with Kurano); 
it is interesting that in this last case the limit has an 
intrinsic geometric interpretation in characteristic zero. 
A few nonconstant cases are known as well: 
The limit was shown to exist for the homogeneous maximal ideal of diagonal hypersurfaces, in 
unpublished work of Gessel and Monsky \cite{Mo-unpub} building on \cite{HM}. 
It was also shown to exist for any homogeneous ideal primary to the 
homogeneous maximal ideal in homogeneous coordinate rings of smooth projective curves by 
Trivedi in \cite{Tr2} by delicate study of the variation of Harder-Narasimhan filtrations 
of Frobenius pullbacks of the syzygy bundle relative to the characteristic $p$.
The limit in this case turns out again to have an intrinsic geometric description 
in characteristic zero. 


In this paper, we are interested in the question of whether a simpler limit gives 
the same result. In particular, is it necessary to use the full HK multiplicity 
$e_{\textrm{HK}}(I_p,R_p)$  in each characteristic $p$? This value is 
itself the usually uncomputable limit 
${{\lim_{n\to\infty} \frac{\ell(R_p/I_p^{[p^n]})}{(p^n)^d}}}$ 
where $d=\dim R$. 
We propose to replace this complex limit with its first term $\frac{\ell(R_p/I_p^{[p]})}{p^d}$ 
or more generally any fixed degree term as follows:

{\bf{Question.}} \ 
Assuming $e_{\textrm{HK}}^{\8}(I,R)$ exists, is it true that for any fixed ${n} \geq 1$
\[
e_{\textrm{HK}}^{\8}(I,R) = 
\lim_{p\to\8} \frac{\ell(R_p/I_p^{[p^{n}]})}{(p^{n})^{d}}
\ \ ? 
\]

Informally, in measuring colengths of $p^n$th bracket powers of the ideal, if $p$ goes to
infinity, is it really necessary to first let $n$ go to infinity?

The motivation behind such a modification is that a simpler limit 
may make it easier to find a geometric interpretation of 
the limit HK multiplicity in characteristic zero. It would be encouraging 
to see a simpler limit giving the possible characteristic zero concept. 
A drawback is that it still does not yield an intrinsic definition of 
$e_{\textrm{HK}}^{\8}(I,R)$ in a characteristic zero setting.

The main goal of this paper is to establish an affirmative answer to the question 
for the case of the homogeneous coordinate rings of smooth projective curves.
Our proof is based on the proofs in this setting of Brenner \cite{Br}
and Trivedi \cite{Tr1, Tr3} of a formula for the HK multiplicity and of
Trivedi \cite{Tr2} regarding the existence of
$e_{\textrm{HK}}^{\8}(I,R)$, but requires some additional work 
as we may not assume that the fixed value $n$ is large 
enough to give strong Harder-Narasimham filtrations of the syzygy bundle 
(the case $n=1$ is the most important).
Fortunately, the gap can be filled using Trivedi's results mentioned above 
to yield: 

\noindent {\bf{Corollary~\ref{maincorollary}}} 
{\emph{Let $R$ be a standard-graded flat domain over $\mathbb Z$ such that 
almost all fiber rings $R_p=R \otimes_{\mathbb Z}{\mathbb Z}/p{\mathbb Z}$ 
are geometrically normal 2-dimensional domains and let $I=(f_1, \ldots, f_s)$ 
be a homogeneous $R_+$-primary ideal. 
With the notation as above, for any fixed $n \geq 1$ one has 
\[
\frac{\l(R_p/I_p^{[p^n]})}{(p^n)^2}
 = e_{\text{HK}}^{\8}(I,R)  + O\left(\frac{1}{p}\right)
\] 
}}

We remark that, with this result, 
the answer to the question above is known to be yes 
in all the main cases in which $e_{\textrm{HK}}^{\8}(I,R)$ is
known to exist so far.  

Section~\ref{background} contains a review of the background.
The groundwork for our main result is done in Section~\ref{bundles} via 
some lemmas on the asymptotic growth of cohomologies of bundles as 
the characteristic $p$ goes to infinity. 
In Section~\ref{mainresult} these lemmas are applied to the syzygy 
bundle, defined in (\ref{def-syzygy}), to obtain the corollary above. 

The remaining part, Section~\ref{monsky}, is devoted to a discussion 
of consequences of Gessel and Monsky's unpublished work \cite{Mo-unpub}. 
We see that a side-product of their proof is an affirmative answer to the 
question above for the case of diagonal hypersurfaces. 
Furthermore, their work shows that the most tempting naive limit in 
characteristic zero does not give $e_{\textrm{HK}}^{\8}(I,R)$. 

Finally, we mention our convention regarding asymptotics throughout the paper: 
Let $q=p^n$. We emphasize that for the asymptotic notation $O(-)$ used
throughout the paper, such as in $O(\frac{q^2}{p})$, $O(q)$, or even
$O(1)$, we have fixed $n > 0$ and let $p \to \8$ (unlike in \cite{Br}
and \cite{Tr1}, where $p$ is fixed and $n$ is allowed to go to infinity).

\section{Preliminaries and Background}
\label{background}

In this section, we present the basic set-up and notations and review
relevant results on vector bundles.

{\bf{Basic set-up}}

Let $R$ be a standard-graded flat domain over $\mathbb Z$ such that 
almost all fiber rings $R_p=R \otimes_{\mathbb Z}{\mathbb Z}/p{\mathbb Z}$ 
are geometrically normal 2-dimensional domains. Let $I=(f_1, \ldots, f_s)$ 
be a homogeneous $R_+$-primary ideal with $\deg f_i=d_i$. 
Let $Y=\proj R_{\mathbb Q}$ where 
$R_\mathbb Q= R \otimes_{\mathbb Z} {\mathbb Q}$. For
each prime $p$, consider the reduction to characteristic $p$
\[
R_p=R \otimes _{\mathbb Z}{\mathbb Z}/p{\mathbb Z}, \text{\ \ \ } 
I_p=IR_p, \text{\ \ \ } 
Y_p= \proj R_p 
\]
Due to our assumptions, $Y$ and $Y_p$ are smooth projective curves for almost all $p$. The corresponding Hilbert-Kunz multiplicity is
\[
e_{\text{HK}}(I_p,R_p)\overset{\textrm{ def }}{=} 
\lim_{n \to \8} \dfrac{\l(R_p/I_p^{[q]})}{q^2}
\]
where $q=p^n$. The key idea in \cite{Br} and \cite{Tr1} for
determining the Hilbert-Kunz multiplicity is to consider the syzygy
bundle $\S=\syz(f_1, \dots, f_s)$ on $Y_p$ (and on $Y$) given by
\begin{equation}
\label{def-syzygy}
0\lra \S \lra \bigoplus_{i=1}^s \O(-d_i) 
\overset{f_1, \ldots, f_s} {\longto} \O \lra 0
\end{equation} 
and the pullback of this exact sequence $n$ times along the 
absolute Frobenius morphism $F: Y_p \lra Y_p$ (with a subsequent
twist by $m\in \mathbb Z$)
\begin{equation}
\label{exact-seq}
0\lra \S^q(m) \lra \bigoplus_{i=1}^s \O(m-qd_i) 
\overset{f_1^q, \dots, f_s^q} {\lra} \O(m) \lra 0
\end{equation} 
where $\S^q$ denotes the pullback $(F^*)^n(\S) = \syz (f_1^q, \dots, f_s^q)$. 

\begin{remark}
\label{syzygy-notation}
Notice that for simplicity, we use the
notation $\S$ for the syzygy bundle over any $Y_p$, as the
characteristic is usually obvious from the context (we
study mostly $\S^q$, not $\S$). The first sequence is just a reduction mod $p$ of the corresponding sequence in characteristic zero. In particular, $\S$ is the reduction to $Y_p$ of the syzygy bundle on $Y$.
\end{remark}

As $R_p$ is normal, the cokernel of the second map
in the associated long exact sequence of cohomology
\[
0\lra H^0(Y_p, \S^q(m)) \lra \bigoplus_{i=1}^s H^0(Y_p, \O(m-qd_i)) \stackrel{f_1^q
, \ldots, f_s^q} {\longto} H^0(Y_p, \O(m)) \lra \cdots
\] 
is the $m$-th graded piece of $R_p/I_p^{[q]}$. 
Brenner \cite{Br} and Trivedi \cite{Tr1} exploited this connection to
$H^0(Y_p,\S^q(m))$ to determine the Hilbert-Kunz multiplicity of
$I_p$ in terms of intrinsic properties of the syzygy bundle, which
we review next.

{\bf{Harder-Narasimhan filtrations}}

Let $X$ be a smooth projective curve over an algebraically closed
field. For any vector bundle $\V$ on $X$ of rank $r$, the \emph{degree}
and \emph{slope} are defined respectively as
\[
\deg(\V)\overset{\textrm{ def }}{=}
\deg(\wedge^r \V) \hspace{3cm} \mu(\V)\overset{\textrm{ def }}{=}
\dfrac{\deg(\V)}{r}
\]
Slope is additive on tensor products of bundles: $\mu(\V \otimes \W)=
\mu(\V)+\mu(\W)$. If $f: X'\lra X$ is a finite map of degree $q$,
then $\deg (f^*(\V))=q\deg(\V)$ and so $\mu(f^*(\V))=q\mu(\V).$

A bundle $\V$ is called \emph{semistable} if for every subbundle
$\W\subseteq \V$ one has $\mu(\W) \leq \mu(\V)$. Clearly,
bundles of rank 1 are always semistable, and duals and twists of
semistable bundles are semistable.

Any bundle $\V$ has a filtration by subbundles
\[
0=\V_0 \subset \V_1 \subset \cdots \subset \V_t=\V
\]
such that $\V_k/\V_{k-1}$ is semistable and $\mu(\V_k/\V_{k-1}) >
\mu(\V_{k+1}/\V_k)$ for each $k$. This filtration is unique, and
it is called the \emph{Harder-Narasimhan (or HN) filtration} of $\V$.

The \emph{maximal} and \emph{minimal slopes} are defined as
\[
\mu_{max}(\V)\overset{\textrm{ def }}{=}
\mu(\V_1/\V_0) 
\hspace{3cm} 
\mu_{min}(\V)\overset{\textrm{ def }}{=}
\mu(\V_t/\V_{t-1}) 
\]

\begin{remark}
In positive characteristic, pulling back under the Frobenius morphism $F$ 
does not necessarily preserve semistability. Therefore, the
pullback under $F^n$ of an HN filtration of
$\V$ does not always give an HN filtration of
$(F^*)^n(\V)$. The existence of a strong HN filtration from \cite{La} 
was crucial to the work in \cite{Br} and \cite{Tr1}, 
i.e., for some $n_0$, the HN filtration of $(F^*)^{n_0}(\V)$ has the property that all its
Frobenius pullbacks are the HN filtrations of $(F^*)^n(\V)$, for all
$n>n_0$. 
\end{remark}

We do not need strong HN filtrations here since for us 
$n$ is fixed at a given value and cannot be modified, but we do need
some relation between the HN filtrations of $\S$ and $\S^q$.
Fortunately, for $p\gg 0$, the following refinement result by
Trivedi {\cite[Lemmas 1.8 and 1.14]{Tr2}} applies:

\begin{prop}[Trivedi]
\label{Trivedi-lemma}  
Let $\V$ be a bundle of rank $r$ on a
smooth projective curve $X$ of genus $g$ over an algebraically closed
field of characteristic $p$ with $p>4(g-1)r^3$. Let $n\geq 1$ and $q=p^n$.
If
\[
0=\V_0 \subset \V_1 \subset \cdots \subset \V_t=\V
\]
is the HN filtration of $\V$, then its pullback 
\[
0=(F^*)^n(\V_0) \subset (F^*)^n(\V_1) \subset \cdots
 \subset (F^*)^n(\V_t)=(F^*)^n(\V)
\]
can be refined to the HN filtration of $(F^*)^n(\V)$.

Furthermore, denoting the $k$th portion of the refined filtration as follows
\[
(F^*)^n(\V_{k-1})=\V_{k,0}\subset \V_{k,1} \subset \cdots 
\subset \V_{k,t_k}=(F^*)^n(\V_k)
\]
one has that for any $i$ 
\[
\left|
\dfrac{\mu(\V_{k,i}/\V_{k,i-1})}{q}  -  \mu(\V_k/\V_{k-1}) 
\right| 
\leq  \dfrac{C}{p}
\]
where $C$ is a constant depending only on $g$ and
$r$.
\end{prop}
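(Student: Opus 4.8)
The plan is to deduce both assertions from a single ingredient: a bound on the instability of Frobenius pullbacks of semistable bundles that is \emph{uniform in} $p$. By a classical estimate (Shepherd-Barron, Sun; see also \cite{La}) there is a constant $\beta=\beta(r,g)$, for instance $\beta=(r-1)(2g-2)$, such that $\mu_{max}(F^*\W)-\mu_{min}(F^*\W)\le\beta$ for every semistable bundle $\W$ of rank at most $r$ on $X$. For $g\le 1$ Frobenius pullback preserves semistability (one may take $\beta=0$), so the pulled-back filtration of $\V$ is already the HN filtration of $(F^*)^n\V$ and the second assertion holds with $C=0$; since the hypothesis $p>4(g-1)r^3$ is automatic there, from now on we assume $g\ge 2$ (and $r\ge 2$, the case $r=1$ being trivial).

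\emph{Step 1 (iterating the bound).} For a semistable $\W$ of rank $\le r$ write $\delta_m:=\mu_{max}((F^*)^m\W)-\mu_{min}((F^*)^m\W)$. First I would prove by induction on $m\ge 1$ that $\delta_m\le 3\beta\,p^{m-1}$; taking $m=n$ this gives $\delta_n\le 3\beta\,q/p=:C_1q/p$. The case $m=1$ is the bound $\beta$ above. For the inductive step put $\E=(F^*)^m\W$ and apply $F^*$ to its HN filtration: each graded piece is $F^*$ of a semistable bundle whose slope $\nu$ is an HN-slope of $\E$, hence has maximal slope $\le p\nu+\beta$ and minimal slope $\ge p\nu-\beta$. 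Since the maximal slope of an iterated extension is at most the largest of the maximal slopes of its graded pieces (and dually for minimal slopes), and the $\nu$'s lie in $[\mu_{min}(\E),\mu_{max}(\E)]$, one obtains $\mu_{max}((F^*)^{m+1}\W)\le p\,\mu_{max}(\E)+\beta$ and $\mu_{min}((F^*)^{m+1}\W)\ge p\,\mu_{min}(\E)-\beta$, hence $\delta_{m+1}\le p\,\delta_m+2\beta$. Solving this recurrence with $\delta_1\le\beta$ yields $\delta_m\le p^{m-1}\beta+2\beta\,\tfrac{p^{m-1}-1}{p-1}\le 3\beta\,p^{m-1}$, as claimed. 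Finally, since $\mu((F^*)^n\W)=q\,\mu(\W)$ lies between $\mu_{min}((F^*)^n\W)$ and $\mu_{max}((F^*)^n\W)$, every HN-slope $\lambda$ of $(F^*)^n\W$ satisfies $|\lambda-q\,\mu(\W)|\le C_1q/p$.

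\emph{Step 2 (refinement and the slope estimate).} Frobenius is flat, so $(F^*)^n$ is exact, and the pulled-back filtration $0=(F^*)^n\V_0\subset\cdots\subset(F^*)^n\V_t=(F^*)^n\V$ has $k$-th graded piece $(F^*)^n(\V_k/\V_{k-1})$, the $n$-fold Frobenius pullback of a semistable bundle of slope $\mu_k:=\mu(\V_k/\V_{k-1})$ and rank $r_k\le r$; by Step 1 all of its HN-slopes lie in $[\,q\mu_k-C_1q/p,\;q\mu_k+C_1q/p\,]$. To conclude that concatenating the HN filtrations of these graded pieces yields the HN filtration of $(F^*)^n\V$, it suffices --- by uniqueness of HN filtrations --- that the concatenated filtration have semistable quotients of strictly decreasing slopes; the only inequality to be checked is the jump between consecutive blocks, namely $\mu_{min}((F^*)^n(\V_k/\V_{k-1}))>\mu_{max}((F^*)^n(\V_{k+1}/\V_k))$, which by the interval above is implied by $\mu_k-\mu_{k+1}>2C_1/p$. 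Now $\mu_k$ and $\mu_{k+1}$ are slopes of bundles of ranks $r_k,r_{k+1}$ with $r_k+r_{k+1}\le r$, so $\mu_k-\mu_{k+1}\ge 1/(r_kr_{k+1})\ge 4/r^2$; hence the desired inequality holds as soon as $p>C_1r^2/2=3(r-1)(g-1)r^2$, which follows from $p>4(g-1)r^3$. This proves the refinement; and since the quotients $\V_{k,i}/\V_{k,i-1}$ are then exactly the HN-quotients of $(F^*)^n(\V_k/\V_{k-1})$, dividing the bound of Step 1 by $q$ gives $|\mu(\V_{k,i}/\V_{k,i-1})/q-\mu_k|\le C_1/p$, so one may take $C=C_1=3(r-1)(2g-2)$.

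\emph{The main obstacle.} Everything from Step 1 onward is routine bookkeeping with slopes of subquotients; the one essential input is the $p$-independence of $\beta$. Were $\beta$ allowed to grow with $p$, the ratio $\delta_n/q$ would not tend to $0$, the slope-separation argument of Step 2 would break down once $r$ is large, and neither the refinement nor the numerical threshold $p>4(g-1)r^3$ would be reachable; this is exactly the point that \cite[Lemmas 1.8 and 1.14]{Tr2} isolates and handles, with sharper constants.
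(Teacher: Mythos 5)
Your argument is correct, and it is essentially the argument behind the result as it appears in the source the paper cites: the paper gives no proof of this proposition, attributing it to \cite[Lemmas 1.8 and 1.14]{Tr2}, and Trivedi's proof there rests on exactly the same two pillars you use --- the $p$-independent Shepherd-Barron/Sun/Langer bound on the instability of $F^*$ of a semistable bundle, iterated along the HN filtration, together with the $1/(r_kr_{k+1})$ separation of HN slopes coming from integrality of degrees. Your bookkeeping (the recurrence $\delta_{m+1}\le p\,\delta_m+2\beta$, the $4/r^2$ gap, and the resulting threshold on $p$) is sound and compatible with the stated hypothesis $p>4(g-1)r^3$.
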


In our situation the curves $Y$ and $Y_p$ are not defined over an algebraically closed field, but due to our assumptions the curves
$Y_{\overline{\mathbb Q}} = Y \times_{\mathbb Q} \overline{\mathbb Q}$ and
$\overline{Y}_p= Y_p \times_{{\mathbb Z}/p{\mathbb Z}} {\overline{{\mathbb Z}/p{\mathbb Z}}}$ are smooth projective curves over the algebraic closures. In our setting the definition of degree, semistability and the Harder-Narasimhan filtration descends to the original curves. Hence we will move to the algebraic closure and back whenever this is convenient. Moreover, because of the openness of semistability in a family, the Harder-Narasimhan filtration of $\S$ on $Y$ extends to the Harder-Narasimhan filtration almost everywhere, so that the slopes of the quotients in the Harder-Narasimhan filtration of $\S$ on $Y_p$ are constant for almost all $p$.

\section{Asymptotic Lemmas for Bundles}
\label{bundles}

In this section we prove various asymptotic results on the cohomologies of
bundles that will be used in the next section for the proof of the main result. 
Let $\S$ be any bundle on the relative curve $\proj R \rightarrow \spec \Z$. Fix $n\geq 0$ and set $q=p^n$ for varying $p$.  We denote the restriction of $\S$ to $Y_p$ again by the
symbol $\S$, as this should cause no confusion in context. We first
review the notation that we use to describe concisely the data from the
various HN filtrations.

{\bf{Notation}}

We continue this practice of introducing notation unadorned
by the characteristic $p$ as it will always be obvious from the
context.

First, for each $p$, write the HN filtration of $\S$ as
\[
0=\S_0 \subset \S_1 \subset \cdots \subset \S_t=\S
\]
with slopes, normalized slopes, and ranks (for $k=1,\ldots,t$) defined as follows:
\[
\mu_k\overset{\textrm{ def }}{=}\mu(\S_k/\S_{k-1}) 
\hspace{1.3cm} 
\nu_k\overset{\textrm{ def }}{=}\dfrac{-\mu_k}{\deg Y_p}
\hspace{1.3cm} 
r_k\overset{\textrm{ def }}{=}\rank(\S_k/\S_{k-1}) 
\]

Throughout we will assume that 
$p$ has been taken to be large enough so that the notations
$\mu_k$, $\nu_k$ and $r_k$ refer to constants.

Taking pullbacks under the $n$th Frobenius morphism and setting
\[
\S_k^q\overset{\textrm{ def }}{=} (F^*)^n(\S_k)
\]
gives
\[
0=\S_0^q \subset \S_1^q \subset \cdots \subset \S_t^q=\S^q
\]
By Proposition~\ref{Trivedi-lemma}, for $p \gg 0$, the HN filtration of $\S^q$ 
can be obtained by refining each containment above, say as
\[
S_{k-1}^q=\S_{k,0} \subset \S_{k,1} \subset \cdots \subset \S_{k,t_k}=\S_k^q
\]
We denote the maximal and minimal slopes in this portion as
\[
\mu_k^{max}\overset{\textrm{ def }}{=}\mu(\S_{k,1}/\S_{k,0}) 
\text{\ \ and \ \ }
\mu_k^{min}\overset{\textrm{ def }}{=}\mu(\S_{k,t_k}/\S_{k,t_k-1}) 
\]
(we will not need the intermediate slopes). 
Further, we define normalized versions of these slopes as
\[
\nu_k^{max}\overset{\textrm{ def }}{=}\dfrac{-\mu_k^{max}}{q\deg Y_p} 
\text{\ \ and \ \ } 
\nu_k^{min}\overset{\textrm{ def }}{=}\dfrac{-\mu_k^{min}}{q\deg Y_p}
\]
Note that
\[
\mu_1^{max}\geq\mu_1^{min}>\mu_2^{max}\geq\mu_2^{min}>\cdots>
\mu_k^{max}\geq\mu_k^{min}> \cdots >\mu_t^{max}\geq\mu_t^{min}
\]
and therefore
 \[
 \nu_1^{max}\leq\nu_1^{min}<\nu_2^{max}\leq\nu_2^{min}<\cdots< 
 \nu_k^{max}\leq\nu_k^{min}< \cdots <\nu_t^{max}\leq\nu_t^{min}
 \]
 In this situation, Trivedi's result, Proposition~\ref{Trivedi-lemma}, becomes:
 
\begin{cor}[Trivedi]
 \label{Trivedi-lemma-simple}
 With the notations as above, for any $k$, as $p \to \8$
 \[
 \nu_k^{max}=\nu_k+O\left(\dfrac{1}{p}\right) 
 \text{\ \ and \ \ }  
 \nu_k^{min}=\nu_k+O\left(\dfrac{1}{p}\right)
 \]
 \end{cor}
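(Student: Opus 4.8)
The plan is to deduce the corollary directly from Proposition~\ref{Trivedi-lemma}, applied to the bundle $\V=\S$ on the curve $\overline{Y}_p$, by rewriting the estimate there in terms of the normalized slopes. First I would check that the hypotheses of Proposition~\ref{Trivedi-lemma} hold for $p\gg 0$: the genus $g$ of $\overline{Y}_p$ and the rank $r$ of $\S$ are constant for almost all $p$ (the genus is read off the Hilbert polynomial of $R_{\mathbb Q}$, which governs all but finitely many fibers by flatness, and $\overline{Y}_p$ is smooth so its arithmetic and geometric genus agree), so $p>4(g-1)r^3$ holds eventually; moreover, as recalled in Section~\ref{background}, the HN filtration of $\S$ on $Y_p$, and hence the slopes $\mu_k$ and ranks $r_k$, stabilize, and $\deg Y_p=\deg Y$ is a fixed positive integer for almost all $p$. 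In particular the normalizing factor $\deg Y_p$ occurring in $\nu_k$, $\nu_k^{max}$, $\nu_k^{min}$ is a constant, and the constant $C$ of Proposition~\ref{Trivedi-lemma} may be chosen independent of $p$.

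Next, fixing such a $p$ and a $k$: the refinement of the $k$th step of the pulled-back filtration produced by Proposition~\ref{Trivedi-lemma} is precisely the chain $\S_{k-1}^q=\S_{k,0}\subset\cdots\subset\S_{k,t_k}=\S_k^q$ of the Notation, so the cases $i=1$ and $i=t_k$ of the inequality there read
\[
\left|\frac{\mu_k^{max}}{q}-\mu_k\right|\le\frac{C}{p}
\qquad\text{and}\qquad
\left|\frac{\mu_k^{min}}{q}-\mu_k\right|\le\frac{C}{p}.
\]
Dividing both by the positive constant $\deg Y_p$ and inserting the definitions $\nu_k=-\mu_k/\deg Y_p$ and $\nu_k^{max}=-\mu_k^{max}/(q\deg Y_p)$, $\nu_k^{min}=-\mu_k^{min}/(q\deg Y_p)$ yields
\[
\bigl|\nu_k^{max}-\nu_k\bigr|\le\frac{C}{p\,\deg Y_p}
\qquad\text{and}\qquad
\bigl|\nu_k^{min}-\nu_k\bigr|\le\frac{C}{p\,\deg Y_p},
\]
which, since $n$ is fixed while $p\to\8$, is exactly $\nu_k^{max}=\nu_k+O(1/p)$ and $\nu_k^{min}=\nu_k+O(1/p)$.

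There is no serious obstacle here: the argument is essentially a reformulation of Proposition~\ref{Trivedi-lemma}. The two points deserving a word of care are the passage to the algebraically closed fields $\overline{\mathbb Q}$ and $\overline{\mathbb Z/p\mathbb Z}$ — harmless because degree, semistability and the HN filtration are unaffected by this base change, as noted in Section~\ref{background} — and the uniformity in $p$ of $g$, $r$, $\deg Y_p$, and hence of the implied constant, which is what makes the bound $C/(p\,\deg Y_p)$ genuinely of order $1/p$.
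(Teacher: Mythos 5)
Your proposal is correct and is precisely the argument the paper intends: the corollary is stated as an immediate reformulation of Proposition~\ref{Trivedi-lemma} in the normalized notation, obtained by taking the cases $i=1$ and $i=t_k$ and dividing by the constant $\deg Y_p$. Your additional care about the uniformity in $p$ of $g$, $r$, $\deg Y_p$ and the constant $C$ makes explicit what the paper leaves implicit in its standing assumptions for $p\gg 0$.
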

 
Furthermore, letting $\omega$ denote the canonical bundle, we set 
\[
\theta=\frac{\deg \omega_{Y_p}}{\deg Y_p}
\]
which is constant for $p\gg 0$ by the earlier discussion. 

Lastly, for any sheaf $\F$ on $Y_p$ 
we write $h^i(\F)$ or $h^i(Y_p, \F)$ for $\dim_k H^i(Y_p,\F)$.

{\bf{Asymptotic Lemmas}}

We first prove a lemma on the cohomology of the twisted bundles
$\S^q(m)$ in various ranges of $m$. Both the lemma and its proof are
in direct analogy with Proposition~3.4 of \cite{Br}, but as now we
have that $p$, not $n$, is going to infinity, some more care must be
taken. In particular, note that we cannot use strong HN
filtrations as $n$ is fixed. Instead we compare the filtration to that
of the original bundle using the results of Trivedi described in
Section~\ref{background}.

In the proofs of the asymptotic parts of the next few results, we
assume that $p$ has been taken large enough so that the genus and
degree of $Y_p$ equal those of $Y$, and we denote them by $g$ and
$\deg Y$, respectively.  We also assume that $p$ is large enough so that 
the slopes $\mu_k$ and normalized slopes $\nu_k$ are constant and 
that $\deg \omega_{Y_p}=\deg \omega_Y$. 

Note that for $p\gg 0$ one has inequalities 
\[
q\nu_k^{max} \le q\nu_k^{min} < q\nu_k^{min} + \theta < q\nu_{k+1}^{max}
\]
where the last one holds by Corollary~\ref{Trivedi-lemma-simple}, 
the inequality $\nu_k  < \nu_{k+1}$  and the fact that $\theta$ is constant for $p\gg 0$.

\begin{lemma}
\label{lemma123}
Let $\S$ be a bundle on $Y$. 
With the notation above (and setting $\nu_{t+1}=\8$), one has
for  $1 \leq k \leq t$: 

(i) 
If $m<q\nu_{k+1}^{max}$, then 
\[
\HH^0(Y_p, \S^q(m))=\HH^0(Y_p, \S_k^q(m))
\]

\ \hspace{3mm} In particular, if $m<q\nu_1^{max}$, then $\HH^0(Y_p,
\S^q(m))=0$.

(ii) 
If $q\nu_k^{min}+\theta<m$, then 
\[
\HH^1(Y_p, \S^q_k(m))=0
\]

(iii) One has 
\[
\sum_{m= \T q\nu_k^{max} \7}^{\4 q\nu_k^{min}+\theta \3} \h^1(Y_p, \S_k^q(m))
=O\left(\dfrac{q^2}{p}\right)
\]
\end{lemma}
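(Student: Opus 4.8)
The plan is to derive parts (i) and (ii) from two standard facts --- a semistable bundle of negative slope on a smooth projective curve has no nonzero global sections, and Serre duality --- together with the refinement statement of Proposition~\ref{Trivedi-lemma}, and to prove (iii) by splitting the sum as (number of summands) times (a bound for each summand that is uniform in $m$).

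For (i): by Proposition~\ref{Trivedi-lemma} the HN filtration of $\S^q$ refines the chain $\S_k^q\subset\S_{k+1}^q\subset\cdots\subset\S^q$, and every semistable subquotient $\mathcal Q$ occurring strictly above $\S_k^q$ satisfies $\mu(\mathcal Q)\le\mu_{k+1}^{max}$, i.e. $-\mu(\mathcal Q)\ge q\nu_{k+1}^{max}\deg Y_p$. Hence for $m<q\nu_{k+1}^{max}$ the twist $\mathcal Q(m)$ has negative slope, so $\HH^0(Y_p,\mathcal Q(m))=0$; running this through the long exact cohomology sequences of the successive extensions from $\S_k^q$ up to $\S^q$ yields $\HH^0(Y_p,\S^q(m))=\HH^0(Y_p,\S_k^q(m))$. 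For the ``in particular'', note that if $m<q\nu_1^{max}$ then $m<q\nu_2^{max}$ (by the chain $\nu_1^{max}\le\nu_1^{min}<\nu_2^{max}$), so the equality just proved gives $\HH^0(Y_p,\S^q(m))=\HH^0(Y_p,\S_1^q(m))$; and every semistable subquotient of the HN filtration of $\S_1^q$ has $-\mu\ge q\nu_1^{max}\deg Y_p$, so the same argument applied to $\S_1^q$ shows $\HH^0(Y_p,\S_1^q(m))=0$. For (ii): dualizing the HN filtration of $\S_k^q$ gives $\mu_{max}((\S_k^q)^\vee)=-\mu_k^{min}$, so every semistable subquotient of $(\S_k^q)^\vee(-m)\otimes\omega_{Y_p}$ has slope at most $-\mu_k^{min}-m\deg Y_p+\deg\omega_{Y_p}$, which is negative precisely when $m>q\nu_k^{min}+\theta$. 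For such $m$ the long exact sequences of that filtration give $\HH^0(Y_p,(\S_k^q)^\vee(-m)\otimes\omega_{Y_p})=0$, and Serre duality gives $\HH^1(Y_p,\S_k^q(m))=0$.

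For (iii): by Corollary~\ref{Trivedi-lemma-simple} we have $\nu_k^{min}-\nu_k^{max}=O(1/p)$, so --- since $\theta$ is constant for $p\gg0$ and $q/p=p^{n-1}\ge1$ --- the interval $[\T q\nu_k^{max}\7,\4 q\nu_k^{min}+\theta\3]$ contains only $O(q/p)$ integers. It remains to bound $\h^1(Y_p,\S_k^q(m))=O(q)$ uniformly over $m$ in this interval. For such $m$ one has $|m|=O(q)$, and since $\deg\S_k=O(1)$ while $g$, $\deg Y_p$ and $\deg\omega_{Y_p}$ are constant for $p\gg0$, the bundle $\S_k^q(m)$ has rank at most $\rank\S$, degree $O(q)$, and minimal HN slope $\mu_k^{min}+m\deg Y_p=O(q)$. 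Choosing an integer $N=O(q)$ with $\mu_{min}(\S_k^q(m))+N\deg Y_p>2g-2$ pushes every HN slope of $\S_k^q(m+N)$ above $2g-2$, forcing $\HH^1(Y_p,\S_k^q(m+N))=0$; hence $\h^0(Y_p,\S_k^q(m))\le\h^0(Y_p,\S_k^q(m+N))=\chi(\S_k^q(m+N))=O(q)$, and Riemann--Roch for $\S_k^q(m)$ then gives $\h^1(Y_p,\S_k^q(m))=O(q)$. Multiplying the two estimates, $\sum_m\h^1(Y_p,\S_k^q(m))=O(q/p)\cdot O(q)=O(q^2/p)$. (In fact, for $m$ in this interval every HN slope of $\S_k^q(m)$ lies within $O(q/p)$ of $0$, so each term is really $O(q/p)$ and the sum is $O(q^2/p^2)$; but $O(q^2/p)$ is all that is needed afterwards.)

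The one genuine point is the uniformity in (iii): one must check that every quantity entering the per-summand bound --- the twist $m$, the slopes $\mu_k^{max}$ and $\mu_k^{min}$, the number $N$ of twisting points, and the invariants $g$, $\deg Y_p$, $\deg\omega_{Y_p}$ --- is $O(q)$ with a constant independent of $p$, which is precisely where Corollary~\ref{Trivedi-lemma-simple} and the eventual constancy of $g$, $\deg Y$ and $\theta$ are used. The cohomological inputs (negative-slope vanishing, Serre duality, the long exact sequences of the HN filtration, Riemann--Roch) are all standard, so no other serious difficulty arises.
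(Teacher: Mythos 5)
Your proofs of (i) and (ii) follow the paper's argument essentially verbatim: (i) reduces to the vanishing of $\HH^0$ of the quotient $\S^q/\S_k^q\,(m)$, whose maximal slope is $\mu_{k+1}^{max}+m\deg Y_p<0$ in the stated range (your phrasing via the semistable subquotients above $\S_k^q$ is the same computation), and (ii) is exactly the paper's Serre-duality argument. Part (iii) is where you genuinely diverge, and your route is correct. The paper uses that $\S_k$ is the reduction mod $p$ of a characteristic-zero bundle to produce surjections $\bigoplus_j\O(\alpha_j)\too\S_k\too 0$ with $\alpha_j$ independent of $p$; this reduces the claim to summing $\h^1(Y_p,\O(l))$ of line bundles over an interval of length $O(q/p)$, which is then evaluated by Riemann--Roch. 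You instead bound each term $\h^1(Y_p,\S_k^q(m))$ directly by $O(q)$, uniformly in $m$, using the slope data of $\S_k^q(m)$ (twist by $N=O(q)$ to kill $\HH^1$, bound $\h^0$ by the Euler characteristic, then apply Riemann--Roch to recover $\h^1$), and multiply by the interval length $O(q/p)$ coming from Corollary~\ref{Trivedi-lemma-simple}. Both arguments rest on the same spreading-out facts (eventual constancy of $g$, $\deg Y$, $\theta$, the $r_k$ and $\nu_k$, hence of $\deg\S_k$), but yours avoids the explicit characteristic-zero presentation by line bundles, at the cost of a slightly longer per-term estimate; your parenthetical observation that each term is in fact $O(q/p)$, giving $O(q^2/p^2)$ for the sum, is also correct and sharper than what either proof needs. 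I see no gap.
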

In particular, setting $k=t$ and noting that $\S_t=\S$, one sees that (ii) and (iii) 
yield the following. 

\begin{cor}
\label{corollary2-3}
\[
\sum_{m=\T q\nu_t^{max} \7}^\8 \h^1(Y_p, \S^q(m))
=O\left(\dfrac{q^2}{p}\right)
\]
\end{cor}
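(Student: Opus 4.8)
The plan is to derive Corollary~\ref{corollary2-3} directly from parts (ii) and (iii) of Lemma~\ref{lemma123} applied with $k=t$. Since $\S_t = \S$, part (ii) says that $\HH^1(Y_p,\S^q(m)) = 0$ whenever $m > q\nu_t^{min} + \theta$, so the infinite sum $\sum_{m=\T q\nu_t^{max}\7}^{\8} \h^1(Y_p,\S^q(m))$ is actually a finite sum with range of $m$ running only from $\T q\nu_t^{max}\7$ up to $\4 q\nu_t^{min}+\theta\3$ (the terms beyond this all vanish).

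First I would note that the inequalities $q\nu_t^{max} \le q\nu_t^{min} < q\nu_t^{min} + \theta$ recorded just before the statement of the lemma guarantee that this truncated range is the correct one and that the upper limit is genuinely $\geq$ the lower limit, so the sum makes sense. Then I would simply quote part (iii) of Lemma~\ref{lemma123} with $k=t$, which gives precisely
\[
\sum_{m=\T q\nu_t^{max}\7}^{\4 q\nu_t^{min}+\theta\3} \h^1(Y_p,\S_t^q(m)) = O\!\left(\frac{q^2}{p}\right).
\]
Combining these two observations yields the claimed estimate. There is essentially no obstacle here: the corollary is a formal specialization of the preceding lemma, and the only thing to check is that the "tail" $m > q\nu_t^{min}+\theta$ contributes nothing, which is exactly content (ii). Thus the proof is a two-line deduction, and the real work has already been done in establishing Lemma~\ref{lemma123}.
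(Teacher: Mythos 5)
Your proposal is correct and matches the paper's own (one-line) justification exactly: the paper likewise obtains Corollary~\ref{corollary2-3} by setting $k=t$ in Lemma~\ref{lemma123}, using part (ii) to kill the tail $m > q\nu_t^{min}+\theta$ and part (iii) to bound the remaining finite range. Nothing further is needed.
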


\begin{proof}[Proof of Lemma \ref{lemma123}]

(i) 
Consider the exact sequence
\[
0 \longto \S_k^q(m) \longto \S^q(m) \longto \S^q/\S_k^q \ (m) \longto 0.
\]
When $m<q\nu_{k+1}^{max}=\frac{-\mu_{k+1}^{max}}{\deg Y_p}$, we have
\[
\mu_{max}(\S^q/\S_k^q \ (m))
=\mu_{max}(\S^q/\S_k^q)+m \deg Y_p
=\mu_{k+1}^{max}+m\deg Y_p<0
\]
where the second equality is due to the fact that the HN
filtration of $\S^q/S_k^q$ is obtained via quotients from the
portion of the HN filtration of $\S^q$ that contains $\S_k^q$.
Thus $\HH^0(Y_p, \S^q/\S_k^q \ (m))=0$, and the result follows from the
long exact sequence of cohomology.

(ii)
By Serre duality,
\[
\HH^1(Y_p,\S_k^q(m))\cong \HH^0(Y_p,\S_k^q(m)^\vee \otimes \omega_{Y_p})
\]
But when
$m>q\nu_k^{min}+\theta=\dfrac{-\mu_k^{min}+\deg \omega_{Y_p}}{\deg Y_p}$,
we have
\begin{align*}
\mu_{max}(\S_k^q(m)^\vee \otimes
\omega_{Y_p})&=-\mu_{min}(\S_k^q(m))+\mu(\omega_{Y_p})\\
&=-(\mu_k^{min}+m\deg Y_p)+\deg \omega_{Y_p}<0
\end{align*}
and so $\HH^0(Y_p, \S_k^q(m)^\vee \otimes \omega_{Y_p})=0$.

(iii)
Since for $p\gg 0$ the bundle $\S_k$ on $Y_p$ is the specialization
(reduction mod $p$) of the corresponding subbundle in the HN
filtration of the syzygy bundle in characteristic zero, there
exist integers $\alpha_1, \dots, \alpha_s$ (independent of $p$)
and surjections of sheaves on $Y_p$
\[
\overset{s}{\underset{j=1}{\bigoplus}} \O(\alpha_j) \longto \S_k \longto 0
\]
for all $p\gg 0$. Applying the Frobenius pullback $(F^*)^n$,
twisting by $m$, and taking cohomology yields surjections
\[
\overset{s}{\underset{j=1}{\bigoplus}}\HH^1(Y_p,\O(q\alpha_j+m)) 
\longto \HH^1(Y_p,\S_k^q(m)) \longto 0
\]
Therefore it is enough to show that for any fixed integer $\alpha$
\[
\sum_{m=\T q\nu_k^{max}\7}^{\4 q\nu_k^{min}+\theta \3} \h^1(Y_p, \O(q\alpha+m))
=O \left(\dfrac{q^2}{p}\right)
\]
Reindexing and setting $L_0=q\alpha + \T q\nu_k^{max}\7$ and $L_1=q\alpha
+ \4 q\nu_k^{min}+\theta\3$ yields the sum 
\[
\sum_{l=L_0}^{L_1} \h^1(Y_p,\O(l))
\]

For those $p$ for which  $L_0\geq0$, this sum is bounded 
by Remark~\ref{lemma0} below. 
So, we may assume that $L_0<0$.
In that case, Remark~\ref{lemma0} again yields that the sum of the
terms with $\l \geq 0$ is bounded independent of $p$, and so, setting $L=\min(L_1,-1)$, 
we get
\[
\sum_{l=L_0}^{L_1} \h^1(Y_p, \O(l))=\sum_{l=L_0}^{L} \h^1(Y_p, \O(l))+O(1)
\]
In this remaining range, $\h^0(Y_p,\O(l))=0$ since $l<0$ 
and so the Riemann-Roch theorem yields the sum
\[
\sum_{l=L_0}^{L} ( -l\deg Y-(1-g)) +O(1)
= -\dfrac{\deg Y}{2}(L-L_0+1)(L+L_0)-(1-g)(L-L_0+1)+O(1)
\]
where we have used the following summation formula
\[
\sum_{l=a}^b l=\dfrac{(b-a+1)(b+a)}{2} \text{\qquad for any \ } a \leq b \in \mathbb Z
\]
Now, since $\nu_k^{min}=\nu_k+O(\frac{1}{p})$ and
$\nu_k^{max}=\nu_k+O(\frac{1}{p})$ by Corollary~\ref{Trivedi-lemma-simple}, we have
\[
\abs{L+L_0}
\leq \abs{L_1}+ \abs{L_0} \leq 
\abs{q\alpha+q\nu_k^{min}+\theta}+\abs{q\alpha+q\nu_k^{max}}+2
=O(q)
\]
and more crucially
\begin{align*}
0\leq L-L_0+1\leq  L_1-L_0+1 
&= \4 q\nu_k^{min}+\theta \3- \T q\nu_k^{max}\7 +1    \\
&\leq q(\nu_k^{min}-\nu_k^{max})+\theta +1 
=O\!\left(\dfrac{q}{p}\right)
\end{align*}
Plugging these two estimates in the above yields the desired result.
\end{proof}

The following variation of Serre's Vanishing Theorem is used in the proof above. 

\begin{remark}
\label{lemma0}
Note that for a locally free sheaf $\F$ on our family $\proj R \rightarrow \spec {\mathbb Z}$ there exists an $M>0$ (independent of $p$) such that
\[
\HH^1(Y_p, \F_p(m))=0, \text{ for all } m \geq M
\] 
and
\[
\sum_{m=0}^M \h^1(Y_p, \F_p(m))= O(1)
\]
For the generic fiber $Y_{\mathbb Q}$ there exists such a bound by Serre vanishing (\cite[Theorem III.5.2]{Ha}). By semicontinuity (\cite[Theorem III.12.8]{Ha}) it follows that
$\HH^1(Y_p, \F_p(M))=0$ for almost all primes $p$, and by the surjections
$H^1(Y_p, \F_p(m)) \rightarrow H^1(Y_p, \F_p(m+1))$ this is also true for all larger twists. The second statement follows also from semicontinuity.
\end{remark}

As a first step, we now use the lemma above to prove

\begin{lemma}
\label{lemma4}
For any integer $k$ with $1\le k\le t-1$, let $R=\sum_{i=1}^k r_i$
and $D=\sum_{i=1}^k r_i\nu_i$. Then
\[
\sum_{m=\T q\nu_k^{max} \7 }^{\T q\nu_{k+1}^{max} \7 -1} \h^0(Y_p, \S^q(m))
=q^2\deg Y \left( \dfrac{R}{2}(\nu_{k+1}^2-\nu_k^2)
-D(\nu_{k+1}-\nu_k) \right)
+O\left(\dfrac{q^2}{p}\right)
\]
\end{lemma}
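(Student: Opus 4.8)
The plan is to use part~(i) of Lemma~\ref{lemma123} to replace $\S^q$ by the subbundle $\S_k^q$, then apply Riemann--Roch and estimate term by term. Since the summation range is exactly $\{m : \T q\nu_k^{max}\7 \le m < q\nu_{k+1}^{max}\}$, Lemma~\ref{lemma123}(i) gives $\HH^0(Y_p,\S^q(m)) = \HH^0(Y_p,\S_k^q(m))$ for every such $m$, so it suffices to sum $\h^0(Y_p,\S_k^q(m))$. For the numerics, additivity of degree along the HN filtration gives $\rank\S_k = \sum_{i=1}^k r_i = R$ and $\deg\S_k = \sum_{i=1}^k \mu_i r_i = -(\deg Y_p)\,D$; since the $n$th Frobenius pullback multiplies degrees by $q$ and preserves rank, $\rank\S_k^q = R$ and $\deg\S_k^q(m) = (\deg Y_p)(mR - qD)$. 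Riemann--Roch on $Y_p$ then reads
\[
\h^0(Y_p,\S_k^q(m)) = (\deg Y_p)(mR - qD) + R(1-g) + \h^1(Y_p,\S_k^q(m)).
\]

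First I would discard the $\h^1$ terms. By Lemma~\ref{lemma123}(ii), $\h^1(Y_p,\S_k^q(m)) = 0$ whenever $m > q\nu_k^{min}+\theta$, so the only possibly nonzero contributions come from $\T q\nu_k^{max}\7 \le m \le \4 q\nu_k^{min}+\theta\3$; by the inequality $q\nu_k^{min}+\theta < q\nu_{k+1}^{max}$ recorded just before the lemma, this sub-range lies inside the summation range, and Lemma~\ref{lemma123}(iii) bounds its contribution by $O(q^2/p)$. Hence $\sum_m \h^1(Y_p,\S_k^q(m)) = O(q^2/p)$.

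It remains to evaluate the arithmetic-progression sum of $(\deg Y_p)(mR - qD) + R(1-g)$ over $M_0 := \T q\nu_k^{max}\7 \le m \le M_1 := \T q\nu_{k+1}^{max}\7 - 1$. With $N := M_1 - M_0 + 1$ terms (and $N > 0$ for $p \gg 0$, since $\nu_{k+1} - \nu_k > 0$ is constant), this equals $(\deg Y_p)\big(\tfrac{R}{2}N(M_0+M_1) - qDN\big) + R(1-g)N$, using $\sum_{m=M_0}^{M_1} m = \tfrac{N(M_0+M_1)}{2}$. Corollary~\ref{Trivedi-lemma-simple} gives $q\nu_k^{max} = q\nu_k + O(q/p)$ and $q\nu_{k+1}^{max} = q\nu_{k+1} + O(q/p)$, and the ceilings cost only $O(1)$, so $N = q(\nu_{k+1}-\nu_k) + O(q/p)$ and $M_0+M_1 = q(\nu_k+\nu_{k+1}) + O(q/p)$, where we use that $O(1) \subseteq O(q/p)$ because $n \ge 1$. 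Substituting and expanding, with $q\cdot O(q/p) = O(q^2/p)$ and the $O(q/p)$-by-$O(q/p)$ cross term lying in $O(q^2/p^2) \subseteq O(q^2/p)$, turns the first summand into $q^2(\deg Y_p)\big(\tfrac{R}{2}(\nu_{k+1}^2-\nu_k^2) - D(\nu_{k+1}-\nu_k)\big) + O(q^2/p)$, while $R(1-g)N = O(q) \subseteq O(q^2/p)$. Adding back the $\h^1$ contribution and replacing $\deg Y_p$ by $\deg Y$ (valid for $p \gg 0$) yields the claim. The only real obstacle is the error bookkeeping: one must confirm that the ceiling rounding ($O(1)$), the genus term $R(1-g)N$ ($O(q)$), the residual $\h^1$ sum, and the cross terms in the product all collapse into $O(q^2/p)$ — which works precisely because $n \ge 1$ forces $O(1), O(q) \subseteq O(q^2/p)$, and because the residual $\h^1$ sum is controlled only by combining parts~(ii) and~(iii) of Lemma~\ref{lemma123}.
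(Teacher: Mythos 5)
Your proof is correct and follows essentially the same route as the paper's: reduce to $\S_k^q$ via Lemma~\ref{lemma123}(i), apply Riemann--Roch, control the $\h^1$ terms with parts (ii) and (iii), and evaluate the arithmetic progression using Corollary~\ref{Trivedi-lemma-simple}. Your error bookkeeping (in particular the explicit observation that $O(1),O(q)\subseteq O(q^2/p)$ since $n\ge 1$) is just a more detailed version of what the paper leaves implicit.
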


\begin{proof}
By Lemma~\ref{lemma123}(i), in this range for $m$, one has
$\h^0(Y_p, \S^q(m))=\h^0(Y_p, \S_k^q(m))$. 
Applying the Riemann-Roch theorem then gives
\[
\sum_{m=\T q\nu_k^{max} \7 }^{\T q\nu_{k+1}^{max} \7 -1} \h^0(Y_p, \S^q(m))
=\sum_{m=\T q\nu_k^{max} \7 }^{\T q\nu_{k+1}^{max} \7 -1} (\deg \S_k^q(m)
+(\rank \S_k^q)(1-g)+\h^1(Y_p,\S_k^q(m)))
\]
By parts (ii) and (iii) of Lemma~\ref{lemma123}, $\sum
\h^1(Y_p, \S_k^q(m))=O(\frac{q^2}{p})$. Also, since $\rank \S_k^q= \rank
\S_k$, one has $\sum (\rank \S_k^q) (1-g)=O(q)$. Furthermore, by
additivity of slopes on tensor products
\begin{align*}
\deg \S_k^q(m) &= \deg S_k^q+(\rank \S_k^q)(\deg \O(m))\\
                &=q \deg \S_k+(\rank \S_k)m \deg Y\\
                &=q\sum_{i=1}^k r_i \mu_i+ m \deg Y \sum_{i=1}^k
                r_i\\
                &=\deg Y \left(-q \sum_{i=1}^kr_i\nu_i+m\sum_{i=1}^k
                r_i\right)\\
                &=\deg Y (mR-qD)
\end{align*}

Therefore the sum becomes
\begin{align*}
& \sum_{m=\T q\nu_k^{max} \7 }^{\T q\nu_{k+1}^{max} \7 -1}
\deg Y (mR-qD)\ +\ O\left(\dfrac{q^2}{p}\right)
\\
&= \deg Y \left(\dfrac{R}{2}(\T q\nu_{k+1}^{max} \7 - \T q\nu_k^{max} \7)(
\T q\nu_{k+1}^{max} \7 + \T q\nu_k^{max} \7-1) -qD(\T q\nu_{k+1}^{max} \7 -
\T q\nu_k^{max} \7)\right)+ O\left(\dfrac{q^2}{p}\right)
\end{align*}
But $\nu_k^{max}=\nu_k+O(\frac{1}{p})$ for each $k$ by
Corollary~\ref{Trivedi-lemma-simple}, and so the sum indeed simplifies to
\[
\deg Y \left(\dfrac{R}{2} \,q^2(\nu_{k+1}^2-\nu_k^2)-Dq^2(\nu_{k+1}-\nu_k)\right)+O\left(\dfrac{q^2}{p}\right)
\]
as desired.
\end{proof}

\section{Main Result}
\label{mainresult}

Now we return to the basic setting of this paper described at the
start of Section~\ref{background}. 
Recall that pulling back the
exact sequence on $Y_p$ 
\[
0\lra \S \lra \bigoplus_{i=1}^s \O(-d_i) 
\overset{f_1, \dots, f_s} {\longto} \O \lra 0
\] 
along the absolute Frobenius morphism $n$ times 
(with a subsequent twist by $m\in \mathbb Z$) yields the 
long exact sequence of cohomology
\[
0\lra H^0(Y_p, \S^q(m)) \lra \bigoplus_{i=1}^s H^0(Y_p, \O(m-qd_i)) 
\overset{f_1^q, \dots, f_s^q} {\longto} H^0(Y_p, \O(m)) \lra \cdots
\] 
where $\S^q$ denotes the pullback bundle $(F^*)^n(\S) = \syz (f_1^q, \dots,
f_s^q)$.  When $R_p$ is normal, one has that $H^0(Y_p, \O(n))\cong R_n$
for all $n\in\mathbb N$, and so the cokernel of $f_1^q, \dots, f_s^q$ 
is precisely the $m$-th graded piece of $R_p/I_p^{[q]}$. 

For the proof of the main theorem, we will use the results from the
previous section to analyze the cohomologies of $\S^q(m)$. As for
the cohomologies of the twists of the structure sheaf, we need the following
ingredient. Note that although
the statement looks like that of Lemma 2.2 of \cite{Br}, that result
cannot be applied here: For one thing, $\nu_t^{max}$ is not a fixed
number, and, even more crucially, ours is an asymptotic statement as
$p \to \8$, not as $n \to \8$. Yet the proof is essentially the same,
with these modifications in mind.

\begin{lemma}
\label{lemma5}
\[
\sum_{m=0}^{\T q\nu_t^{max} \7 -1} \h^0(Y_p, \O(m))
=q^2\dfrac{\deg Y}{2}\nu_t^2+O\left(\dfrac{q^2}{p}\right)
\]
\[\sum_{m=0}^{\T q\nu_t^{max} \7 -1}  \h^0(Y_p, \O(m-qd_i))
=q^2\dfrac{\deg Y}{2}(\nu_t-d_i)^2+O\left(\dfrac{q^2}{p}\right)
\]
\end{lemma}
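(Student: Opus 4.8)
The plan is to evaluate each of the two sums directly by the Riemann--Roch theorem on the curve $Y_p$, using the uniform Serre vanishing of Remark~\ref{lemma0} to discard the contributions of $\h^1$, and then to replace $\nu_t^{max}$ by $\nu_t$ at the cost of an error of size $O(q^2/p)$ via Corollary~\ref{Trivedi-lemma-simple}. Throughout we work with $p\gg 0$, so that (as already arranged in this section) the genus $g$ and degree $\deg Y$ of $Y_p$ coincide with those of $Y$ and the normalized slopes $\nu_k$ are constant; we also use that $n\ge 1$, so that $q/p=p^{n-1}\ge 1$ and hence $O(q)\subseteq O(q^2/p)$. Note first that $\nu_t>0$: since $\S$ is here the syzygy bundle, $\deg\S=-\bigl(\sum_i d_i\bigr)\deg Y<0$, so $\mu_t=\mu_{min}(\S)<0$. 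In particular the upper limit $N:=\T q\nu_t^{max}\7$ grows with $p$, and $N\ge M$ for $p\gg0$, where $M$ is the bound from Remark~\ref{lemma0}.

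For the first sum, Riemann--Roch gives $\h^0(Y_p,\O(m))=m\deg Y+(1-g)+\h^1(Y_p,\O(m))$. By Remark~\ref{lemma0} the total contribution of the $\h^1$ terms over $0\le m\le N-1$ is $O(1)$ (it vanishes for $m\ge M$). Using $\sum_{m=0}^{N-1}m=\tfrac12 N(N-1)$, what remains is $\tfrac{\deg Y}{2}N^2+O(N)=\tfrac{\deg Y}{2}N^2+O(q)$. Since $N=q\nu_t^{max}+O(1)=q\nu_t+O(q/p)$ by Corollary~\ref{Trivedi-lemma-simple}, squaring gives $N^2=q^2\nu_t^2+O(q^2/p)$, and the $O(q)$ term is absorbed into $O(q^2/p)$; this is the first identity.

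For the second sum, substitute $l=m-qd_i$ to rewrite it as $\sum_{l=-qd_i}^{N-1-qd_i}\h^0(Y_p,\O(l))$. The terms with $l<0$ vanish, a line bundle of negative degree on the integral curve $Y_p$ having no global sections, so the sum equals $\sum_{l=0}^{N'-1}\h^0(Y_p,\O(l))$ with $N':=\max(N-qd_i,0)$. Running the same Riemann--Roch computation as above gives $\tfrac{\deg Y}{2}(N')^2+O(q)$, and $N'=q(\nu_t-d_i)+O(q/p)$; here $\nu_t\ge d_i$, since the composite $\S\hookrightarrow\bigoplus_j\O(-d_j)\to\O(-d_i)$ has image a line bundle of degree $\le -d_i\deg Y$, which is then a quotient bundle of $\S$, forcing $\mu_{min}(\S)\le -d_i\deg Y$; the borderline case $\nu_t=d_i$ is harmless because then the claimed main term is $0$ while $(N')^2=O(q^2/p^2)$. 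Squaring yields $(N')^2=q^2(\nu_t-d_i)^2+O(q^2/p)$, giving the second identity. The only delicate point -- and the reason \cite[Lemma 2.2]{Br} cannot simply be quoted -- is that the error must be held to the precise rate $1/p$ rather than merely $o(1)$; this is exactly what Corollary~\ref{Trivedi-lemma-simple} supplies, together with the elementary observation that the lower-order Riemann--Roch and rounding terms, of size $O(q)$, are absorbed into $O(q^2/p)$ once $n\ge 1$.
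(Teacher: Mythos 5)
Your proof is correct and follows essentially the same route as the paper's: Riemann--Roch on $Y_p$, the uniform Serre vanishing of Remark~\ref{lemma0} to control the $\h^1$ contributions, and Corollary~\ref{Trivedi-lemma-simple} to replace $\T q\nu_t^{max}\7$ by $q\nu_t+O(q/p)$, with the $O(q)$ lower-order terms absorbed into $O(q^2/p)$ since $n\ge 1$. The only differences are cosmetic: the paper writes out the second sum and says the first is similar, while you do the reverse, and you add the (harmless, correct) verifications that $\nu_t>0$ and $\nu_t\ge d_i$, which the paper leaves implicit.
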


\begin{proof}
As in Section~\ref{bundles}, we assume that $p$ has been taken large enough 
so that the genus and degree of $Y_p$ equal those of $Y$, and 
we denote them by $g$ and $\deg Y$, respectively. 

We prove the second statement; the proof of the first is similar.
By the Riemann-Roch theorem, one has 

\begin{align*}
\sum_{m=0}^{\T q\nu_t^{max} \7 -1}  \h^0(\O(m-qd_i))
&=\sum_{m=qd_i}^{\T q\nu_t^{max} \7 -1}  \h^0(\O(m-qd_i))
\\
&=\sum_{m=qd_i}^{\T q\nu_t^{max} \7 -1} (m-qd_i)\deg Y +(1-g)
+\h^1(\O(m-qd_i))
\\
&=\sum_{l=0}^{\T q\nu_t^{max} \7 -qd_i -1}
(l\deg Y+(1-g)+\h^1(\O(l)))
\\
&=\dfrac{\deg Y}{2}(\T q\nu_t^{max} \7 -qd_i)(\T q\nu_t^{max} \7-qd_i-1)
\\
& \hspace{1.4cm} +(1-g)(\T q\nu_t^{max} \7
-qd_i)+\sum_{l=0}^{\T q\nu_t^{max} \7 -qd_i -1} \h^1(\O(l))
\end{align*} 
The last term is $O(1)$ by Remark~\ref{lemma0}. Furthermore, since 
\[
\T q\nu_t^{max} \7 = q\nu_t^{max} + O(1)=q\nu_t + O\left(q \cdot \dfrac{1}{p}\right)
\]
by Corollary~\ref{Trivedi-lemma-simple}, the second term is $O(q)$ and the first term
becomes
\[
q^2\dfrac{\deg Y}{2}(\nu_t-d_i)^2+O\left(\dfrac{q^2}{p}\right)
\]
as desired.
\end{proof}

We are now ready to compute the desired limit.

\begin{theorem} 
\label{maintheorem}
Let $R$ be a standard-graded flat domain over $\mathbb Z$ such that almost all fiber rings 
$R_p=R \otimes_{\mathbb Z}{\mathbb Z}/p{\mathbb Z}$ are geometrically normal 2-dimensional domains and let $I=(f_1, \ldots, f_s)$ be a homogeneous $R_+$-primary ideal. 
Set $r_k$ and $\nu_k$ to be the ranks and normalized slopes of the 
quotients in the HN filtration of the syzygy bundle over $Y=\proj R_{\mathbb Q}$. 
For any fixed integer $n \geq 1$, setting $q=p^n$, one has 
\[
\dfrac{\l(R_p/I_p^{[q]})}{q^2}
=\dfrac{\deg Y}{2} \left(\sum_{k=1}^t r_k\nu_k^2-\sum_{i=1}^sd_i^2 \right)
+O\left(\dfrac{1}{p}\right)
\]
where $R_p=R\otimes_{\Z} \Z/p\Z$, $I_p^{[q]}=(f_1^q, \ldots, f_s^q)R_p$.
\end{theorem}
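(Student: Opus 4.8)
The strategy is to compute the length $\ell(R_p/I_p^{[q]})$ by summing the dimensions of its graded pieces, each of which is the cokernel of the multiplication map
\[
\bigoplus_{i=1}^s H^0(Y_p,\O(m-qd_i)) \overset{f_1^q,\dots,f_s^q}{\longrightarrow} H^0(Y_p,\O(m)),
\]
whose kernel is $H^0(Y_p,\S^q(m))$ by the long exact sequence in (\ref{exact-seq}). So for each $m$ the $m$-th graded piece of $R_p/I_p^{[q]}$ has dimension
\[
h^0(Y_p,\O(m)) - \sum_{i=1}^s h^0(Y_p,\O(m-qd_i)) + h^0(Y_p,\S^q(m)),
\]
and summing over all $m\ge 0$ gives $\ell(R_p/I_p^{[q]})$. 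The key observation is that the sum $\sum_m h^0(Y_p,\S^q(m))$ only receives contributions from $m < q\nu_{t+1}^{max} = \infty$; but by Lemma~\ref{lemma123}(i), for $m < q\nu_1^{max}$ this cohomology vanishes, and for $m$ large enough (once we pass $q\nu_t^{max}$ and reach the Serre-vanishing range of $\S^q$) the analysis is controlled. The plan is to split the range of $m$ at the breakpoints $\T q\nu_1^{max}\7, \T q\nu_2^{max}\7, \dots, \T q\nu_t^{max}\7$, reassemble everything, and cut off the sum at $\T q\nu_t^{max}\7$ modulo an error term.

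First I would handle the tail: for $m \ge \T q\nu_t^{max}\7$, Corollary~\ref{corollary2-3} gives $\sum_{m\ge \T q\nu_t^{max}\7} h^1(Y_p,\S^q(m)) = O(q^2/p)$, and one checks (as in Lemma~\ref{lemma123}(i) with $k=t$) that $H^0(Y_p,\S^q(m))$ for $m$ in this range agrees with $H^0$ of a bundle whose $H^0$'s are computed by Riemann–Roch; more directly, in the range $m \ge \T q\nu_t^{max}\7$ the contributions to $\ell$ from the three terms can be matched against the colength contributions above $q\nu_t^{max}$, and one argues that the colength is actually $0$ for $m$ beyond a bound of size $O(q)$ (since $I_p^{[q]}$ contains all sufficiently high-degree elements, with the threshold linear in $q$). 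Thus
\[
\ell(R_p/I_p^{[q]}) = \sum_{m=0}^{\T q\nu_t^{max}\7 - 1}\Bigl(h^0(Y_p,\O(m)) - \sum_{i=1}^s h^0(Y_p,\O(m-qd_i)) + h^0(Y_p,\S^q(m))\Bigr) + O\!\left(\frac{q^2}{p}\right).
\]
Next I would apply Lemma~\ref{lemma5} to the first two sums over $0 \le m \le \T q\nu_t^{max}\7 - 1$, obtaining $q^2\frac{\deg Y}{2}\nu_t^2$ and $q^2\frac{\deg Y}{2}(\nu_t - d_i)^2$ respectively. For the $\S^q(m)$ term, I would break $[0, \T q\nu_t^{max}\7)$ into the sub-intervals $[\T q\nu_k^{max}\7, \T q\nu_{k+1}^{max}\7)$ for $k = 1,\dots,t-1$ together with $[0,\T q\nu_1^{max}\7)$; the last interval contributes $0$ by Lemma~\ref{lemma123}(i), and each of the others is evaluated by Lemma~\ref{lemma4}, giving $q^2\deg Y\bigl(\frac{R_k}{2}(\nu_{k+1}^2 - \nu_k^2) - D_k(\nu_{k+1}-\nu_k)\bigr)$ where $R_k = \sum_{i\le k} r_i$ and $D_k = \sum_{i\le k} r_i\nu_i$, all modulo $O(q^2/p)$.

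The remaining work is a telescoping bookkeeping computation: assembling $\sum_{k=1}^{t-1}$ of the Lemma~\ref{lemma4} expressions into a closed form. The sum $\sum_k \bigl(\frac{R_k}{2}(\nu_{k+1}^2-\nu_k^2) - D_k(\nu_{k+1}-\nu_k)\bigr)$ should collapse, via Abel summation / rearrangement using $R_k - R_{k-1} = r_k$ and $D_k - D_{k-1} = r_k\nu_k$, to an expression involving $\sum_{k=1}^t r_k\nu_k^2$ and $\nu_t$ (the top breakpoint surviving from the telescoping), and when combined with the $\nu_t^2$ and $(\nu_t - d_i)^2$ terms from Lemma~\ref{lemma5} — note $\sum_i r_i = s$ relates to the rank of $\bigoplus\O(-d_i)$ and the rank identity $\rank\S = s-1$, while $\sum_i d_i = \sum_k r_k\nu_k$ comes from $\deg\S = -\sum d_i \deg Y$ — all the $\nu_t$-dependent terms must cancel, leaving exactly $\frac{\deg Y}{2}\bigl(\sum_k r_k\nu_k^2 - \sum_i d_i^2\bigr)$. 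Dividing by $q^2$ finishes the proof. The main obstacle I anticipate is precisely this final cancellation: verifying that the $\nu_t$-dependent contributions from the three sources vanish requires carefully using the numerical invariants of the syzygy bundle ($\rank\S = s-1$, $\deg\S = -(\sum d_i)\deg Y$), and keeping the floor/ceiling roundings and the $O(q^2/p)$ errors from Corollary~\ref{Trivedi-lemma-simple} under control throughout the telescoping — a single mishandled $\T\cdot\7$ could introduce an $O(q)$, hence $O(1/q)$, error that is fine, but a mishandled $\nu_k^{max}$ versus $\nu_k$ could wreck the $O(1/p)$ bound.
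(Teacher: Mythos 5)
Your proposal is correct and follows essentially the same route as the paper's proof: truncate the sum of graded pieces at $\T q\nu_t^{max}\7$ using the containment of $(R_p/I_p^{[q]})_m$ in $\HH^1(Y_p,\S^q(m))$ together with Corollary~\ref{corollary2-3}, evaluate the structure-sheaf terms by Lemma~\ref{lemma5} and the $\S^q(m)$ terms by Lemma~\ref{lemma123}(i) and Lemma~\ref{lemma4}, and finish with the telescoping cancellation using $\rank\S=s-1$ and $\deg\S=-(\sum_i d_i)\deg Y$ (which the paper delegates to Theorem~3.6 of \cite{Br} but which you correctly spell out).
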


\begin{proof} The long exact sequence of cohomology for the exact
sequence
\[
0
\lra \S^q(m) 
\lra \bigoplus_{i=1}^s \O(m-qd_i) 
\overset{f_1^q, \ldots, f_s^q} {\longto} \O(m) 
\lra 0
\] 
yields the containment
\[
\coker \ \HH^0(f_1^q, \ldots, f_s^q)=( R_p/I_p^{[q]} )_m \subseteq \HH^1(Y_p, \S^q(m)).
\]
Therefore by Corollary~\ref{corollary2-3}
\[
\l(R_p/I_p^{[q]})
=\sum_{m=0}^{\8} \l \left( (R_p/I_p^{[q]})_m \right)
=\sum_{m=0}^{\T q\nu_t^{max} \7 -1} \l \left((R_p/I_p^{[q]})_m \right) + O\left(\dfrac{q^2}{p}\right)
\]
The beginning of the long exact sequence then yields
\[
\l(R_p/I_p^{[q]})
=\sum_{m=0}^{\T q\nu_t^{max} \7 -1}\bigg(\h^0(\O(m))
-\sum_{i=1}^s \h^0(\O(m-qd_i))
+\h^0(\S^q(m))\bigg) + O\left(\dfrac{q^2}{p}\right)
\]
After changing the order of summation, one may apply
Lemma~\ref{lemma5} to get
\[
=q^2\dfrac{\deg Y}{2}(\nu_t^2-\sum_{i=0}^s(\nu_t-d_i)^2)
+\sum_{m=0}^{\T q\nu_t^{max} \7 -1}\h^0(\S^q(m))
+O\left(\dfrac{q^2}{p}\right)
\]
Plugging in the result of Lemma~\ref{lemma4}, using the fact that
$\h^0(\S^q(m))=0$ for $m<\T q\nu_1^{max} \7$ by Lemma~\ref{lemma123}(i),
and simplifying as in Theorem~3.6 of \cite{Br} yields the
desired result.
\end{proof}

This finally brings us to our main goal: The expression on the right hand side
of the equation in Theorem~\ref{maintheorem} is equal to the limit Hilbert-Kunz
multiplicity
\[
e_{\text{HK}}^{\8}(I,R) 
\stackrel{\textrm{def}}{=}  
\lim_{p \to \8} e_{\text{HK}}(I_p,R_p)
\] 
as proved by Trivedi in \cite{Tr2}.  Therefore, we obtain the following consequence. 

\begin{cor} 
\label{maincorollary} 
With the notation as above, for any fixed $n \geq 1$ one has 
\[
\frac{\l(R_p/I_p^{[p^n]})}{(p^n)^2}
 = e_{\text{HK}}^{\8}(I,R)  + O\left(\frac{1}{p}\right)
\] 
In particular, 
\[
e_{\text{HK}}^{\8}(I,R)
= \lim_{p\to \8} \frac{\l(R_p/I_p^{[p^n]})}{(p^n)^2}
\]
\end{cor}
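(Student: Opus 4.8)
The plan is to obtain Corollary~\ref{maincorollary} as an immediate consequence of Theorem~\ref{maintheorem} once it is combined with Trivedi's identification of $e_{\text{HK}}^{\8}(I,R)$ with a geometric invariant of the characteristic-zero curve $Y$. First I would record what Theorem~\ref{maintheorem} already delivers: writing $q=p^n$ for the fixed $n$,
\[
\frac{\l(R_p/I_p^{[q]})}{q^2}
= \frac{\deg Y}{2}\left(\sum_{k=1}^t r_k\nu_k^2 - \sum_{i=1}^s d_i^2\right) + O\!\left(\frac{1}{p}\right),
\]
and I would emphasize that the leading term on the right is a fixed rational number depending only on $Y$ and on the degrees $d_i$ of the generators of $I$ — in particular it does not depend on the choice of $n$, and it is manifestly the geometric quantity appearing in the Brenner--Trivedi formula for Hilbert--Kunz multiplicity in this setting (\cite{Br,Tr1,Tr3}).

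Next I would invoke Trivedi's result from \cite{Tr2}, which shows that in this case the limit $e_{\text{HK}}^{\8}(I,R)=\lim_{p\to\8}e_{\text{HK}}(I_p,R_p)$ exists and equals exactly $\tfrac{\deg Y}{2}\bigl(\sum_k r_k\nu_k^2-\sum_i d_i^2\bigr)$, where $r_k$ and $\nu_k$ are the ranks and normalized slopes of the Harder--Narasimhan filtration of the syzygy bundle on $Y$. Substituting this equality into the displayed consequence of Theorem~\ref{maintheorem} yields the first assertion of the corollary,
\[
\frac{\l(R_p/I_p^{[p^n]})}{(p^n)^2} = e_{\text{HK}}^{\8}(I,R) + O\!\left(\frac{1}{p}\right).
\]
For the final ``in particular'' statement I would simply note that $e_{\text{HK}}^{\8}(I,R)$ is a constant independent of $p$, so that letting $p\to\8$ in the last display makes the $O(1/p)$ term vanish and gives $\lim_{p\to\8}\l(R_p/I_p^{[p^n]})/(p^n)^2 = e_{\text{HK}}^{\8}(I,R)$.

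Since essentially all of the analytic work sits in Section~\ref{bundles} and in Theorem~\ref{maintheorem}, the corollary itself is genuinely a corollary; the one point I would flag as requiring care rather than being purely formal is the matching of two a priori different geometric expressions. The quantity coming out of Theorem~\ref{maintheorem} is built from the Harder--Narasimhan filtration of the syzygy bundle on the characteristic-zero curve $Y$, whereas the characteristic-$p$ Hilbert--Kunz formula is built from the \emph{strong} Harder--Narasimhan filtration on $Y_p$, and these need not coincide for any individual $p$. What reconciles them is precisely Trivedi's asymptotic control of the variation of these filtrations with $p$ — already packaged for our use in Proposition~\ref{Trivedi-lemma} and Corollary~\ref{Trivedi-lemma-simple} — which shows the discrepancy in the relevant slopes is $O(1/p)$ and hence disappears in the limit. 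No estimate beyond those already assembled in Section~\ref{background} is needed.
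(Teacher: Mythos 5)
Your proposal is correct and follows the paper's own route exactly: the paper likewise obtains the corollary by citing Trivedi's result in \cite{Tr2} that the geometric expression $\tfrac{\deg Y}{2}\bigl(\sum_k r_k\nu_k^2-\sum_i d_i^2\bigr)$ appearing in Theorem~\ref{maintheorem} equals $e_{\text{HK}}^{\8}(I,R)$, and then substituting. Your additional remark about reconciling the characteristic-zero and characteristic-$p$ filtration data via Proposition~\ref{Trivedi-lemma} is a fair gloss on why Trivedi's identification holds, but no new argument beyond the paper's is needed.
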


In fact, Trivedi shows that for these rings 
\[
e_{\text{HK}}(I_p,R_p) = e_{\text{HK}}^{\8}(I,R) + O\left(\frac{1}{p}\right)
\]
It is interesting to note that the bound $O(\frac{1}{p})$ on the speed of
convergence is of the same order as in Trivedi's result.

\begin{example}
\label{ex-optimal}
The following example can be found in Monsky's paper \cite{Mo-trinomial}. 
For the ring $R=\mathbb Z/p\mathbb Z[x,y,z]/(x^4+y^4+z^4)$ and 
the homogeneous maximal ideal $I=(x,y,z)$, one has
\[
e_{\text{HK}}(I,R) = 
\begin{cases}
3 + \frac{1}{p^2} & p \equiv 3, 5 \mod 8\\
3  &  p \equiv 1,7 \mod 8
\end{cases}
\]
It is not clear whether all these results are
optimal since we have not been able to find an example with 
the slower convergence rate of $O(\frac{1}{p})$. 
See also Example~\ref{ex-Chang} for diagonal hypersurfaces. 
\end{example}

\section{Diagonal hypersurfaces}
\label{monsky}

Unpublished results of Gessel and Monsky \cite{Mo-unpub} show that
$e_{\textrm{HK}}^{\8}(\mathfrak m,R)$ exists also for 
any diagonal hypersurface over $\mathbb Z$, that is, a ring of the form
\[
R=\frac{\mathbb Z [x_1, \dots, x_s]}{(x_1^{d_1} + \cdots + x_s^{d_s})}
\] 
with respect to  the homogeneous ideal $\mathfrak m$ generated by the variables. 
In this section we show how the proof simultaneously gives an affirmative 
answer to the question in our introduction for these rings, 
i.e., that for any {\it fixed} $n \geq 1$
\[
e_{\text{HK}}^{\8}(\mathfrak m,R)
= \lim_{p\to \8} \frac{\l(R_p/\mathfrak m_p^{[p^n]})}{(p^n)^d}
\]
Furthermore, we then use these methods to provide examples to show that a
certain naive limit in characteristic zero analogous to the one used
in positive characteristic to define the HK multiplicity does 
not give the same answer in general.

{\bf{Affirmative answer for diagonal hypersurface rings}}

We repeat a small part of the arguments from \cite{Mo-unpub} here to show how it yields the
result above. It uses the machinery developed by Han and Monsky in
\cite{HM} for computing HK multiplicities of diagonal hypersurfaces in
positive characteristic. For the notation, we generally refer the
reader to their paper, although the necessities are repeated here.  
For positive integers $k_1, \dots, k_s$ and field $F=\mathbb Z/p\mathbb Z$ define
\begin{align*}
D_F(k_1, \dots, k_s) 
=& \dim_F 
F[x_1, \dots, x_{s-1}]/(x_1^{k_1}, \cdots, x_{s-1}^{k_{s-1}}, 
(x_1+ \cdots + x_{s-1})^{k_s})
\qquad \qquad 
\\
=& \dim_F 
F[x_1, \dots, x_s]/(x_1^{k_1}, \cdots, x_s^{k_s}, x_1+ \cdots + x_s)
\end{align*}
In \cite{Mo-unpub}, Gessel and Monsky show that, for any $p$ and $n$, there are inequalities 
\begin{equation}
\label{monsky-inequal1}
d_1\cdots d_s \frac{D_F(\4 \frac{p}{d_1} \3, \dots, \4 \frac{p}{d_s} \3)}{p^d}
\leq 
\frac{\l(R_p/\mathfrak m_p^{[p^n]})}{(p^n)^d}
\leq 
d_1\cdots d_s \frac{D_F(\4 \frac{p}{d_1} \3+1, \dots, \4 \frac{p}{d_s} \3+1)}{p^d}
\end{equation}
As the outside terms are independent of $n$, taking the limit as $n$
goes to infinity yields inequalities
\begin{equation}
\label{monsky-inequal2}
d_1\cdots d_s \frac{D_F(\4 \frac{p}{d_1} \3, \dots, \4 \frac{p}{d_s} \3)}{p^d}
\leq 
e_{\text{HK}}(\mathfrak m_p,R_p)
\leq 
d_1\cdots d_s \frac{D_F(\4 \frac{p}{d_1} \3+1, \dots, \4 \frac{p}{d_s} \3+1)}{p^d}
\end{equation}
they then prove that, as $p$ goes to infinity, the outside terms
both converge to the same limit, and in fact, both equal 
\[
g\left(\frac{1}{d_1}, \dots, \frac{1}{d_s}\right) + O\left(\frac{1}{p}\right) 
\]
for the function $g\colon [0,1]^s \to \mathbb R$ 
defined as follows: for any numbers $x_1, \dots, x_s \in [0,1]$, set 
\begin{equation}
\label{defn-g}
g(x_1, \dots, x_s) 
= 
\frac{1}{2^{s-1}(s\!-\!1)!}
\sum_{\lambda\in\mathbb Z}  g_{\lambda}(x_1, \ldots, x_s)
\end{equation}
where 
\begin{equation}
\label{defn-glambda}
g_\lambda(x_1, \dots, x_s) 
= \sum_{\epsilon_i = \pm 1 {\textrm{ and }} \sum \epsilon_i x_i \geq 2 \lambda}
\epsilon_1\cdots \epsilon_s(\epsilon_1x_1+\cdots + \epsilon_sx_s-2\lambda)^{s-1}
\end{equation}
Note that $g$ is well-defined since $g_\lambda=0$ for $|\lambda| \gg 0$. 
But then the middle terms in both inequalities (\ref{monsky-inequal1}) and 
(\ref{monsky-inequal2}) go to the same limit (at the same rate) as well. 

In summary, we arrive at the following conclusion.  

\begin{theorem}[Gessel-Monsky]
\label{Monskyresult}
For any diagonal hypersurface ring 
\[
R=\frac{\mathbb Z [x_1, \dots, x_s]}{(x_1^{d_1} + \cdots + x_s^{d_s})} 
\qquad 
d_i \geq 2 
{\textrm{ for all }} i 
\]
with homogeneous maximal ideal $\mathfrak m$ and any fixed $n$, one has 
\[
e_{\text{HK}}^{\8}(\mathfrak m,R)
= e_{\text{HK}}(\mathfrak m_p,R_p) + O\left(\frac{1}{p}\right)
= \frac{\l(R_p/\mathfrak m_p^{[p^n]})}{(p^n)^d} + O\left(\frac{1}{p}\right)
\]
Furthermore, 
\[
e_{\text{HK}}^{\8}(\mathfrak m,R)
= g\left(\frac{1}{d_1}, \dots, \frac{1}{d_s}\right) 
\]
where the function $g$ is defined as above in (\ref{defn-g}) and (\ref{defn-glambda}). 
\end{theorem}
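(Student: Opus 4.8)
The plan is to deduce the theorem from the Han--Monsky computational machinery \cite{HM} as sharpened by Gessel and Monsky \cite{Mo-unpub}, essentially by bookkeeping on top of two inputs recalled above. The first input is the algebraic reduction: for a diagonal hypersurface, the presence of the single relation $x_1^{d_1}+\cdots+x_s^{d_s}$ together with a scaling/recursion property of the combinatorial dimension functions $D_F$ under $k_i\mapsto p\,k_i$ forces, for \emph{every} $p$ and \emph{every} $n$, the sandwich \eqref{monsky-inequal1}, whose two outer quantities are visibly independent of $n$. The second input is the analytic asymptotic: both of those outer quantities equal $g(\tfrac1{d_1},\dots,\tfrac1{d_s})+O(\tfrac1p)$ as $p\to\8$, with $g$ as in \eqref{defn-g}--\eqref{defn-glambda}. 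Granting these, the rest is a squeeze.

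First I would fix an arbitrary $n\ge 1$ and observe that, since the outer terms of \eqref{monsky-inequal1} do not involve $n$, the sandwich together with the analytic asymptotic gives at once
\[
\frac{\l(R_p/\mathfrak m_p^{[p^n]})}{(p^n)^d}\;=\;g\!\left(\frac{1}{d_1},\dots,\frac{1}{d_s}\right)+O\!\left(\frac{1}{p}\right).
\]
Next, again because the outer bounds are $n$-free, letting $n\to\8$ in \eqref{monsky-inequal1} converts it into \eqref{monsky-inequal2}, which traps $e_{\text{HK}}(\mathfrak m_p,R_p)$ between the same two quantities, so that
\[
e_{\text{HK}}(\mathfrak m_p,R_p)\;=\;g\!\left(\frac{1}{d_1},\dots,\frac{1}{d_s}\right)+O\!\left(\frac{1}{p}\right).
\]
Letting $p\to\8$ in this last relation shows that $e_{\text{HK}}^{\8}(\mathfrak m,R)$ exists and equals $g(\tfrac1{d_1},\dots,\tfrac1{d_s})$; combining the two displayed relations then yields the chain $e_{\text{HK}}^{\8}(\mathfrak m,R)=e_{\text{HK}}(\mathfrak m_p,R_p)+O(\tfrac1p)=\l(R_p/\mathfrak m_p^{[p^n]})/(p^n)^d+O(\tfrac1p)$, which is the assertion of the theorem.

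The one substantive ingredient is the analytic asymptotic of $D_F$, the heart of \cite{Mo-unpub}, and the step I expect to be the main obstacle. The idea is to interpret $D_F(k_1,\dots,k_s)/p^{\,d}$, for arguments with $k_i/p\to 1/d_i$, as a Riemann sum of mesh $1/p$: an inclusion--exclusion over the $2^s$ sign patterns $\epsilon_i=\pm1$ and a periodization in the integer $\lambda$ express it as a normalized count of lattice points in the polytopal regions $\{\sum_i\epsilon_ix_i\ge 2\lambda\}$ cut out of a unit box, and this count converges to the corresponding volume, which is exactly $g(\tfrac1{d_1},\dots,\tfrac1{d_s})$ as defined in \eqref{defn-g}--\eqref{defn-glambda} (a finite sum, since $g_\lambda=0$ for $|\lambda|\gg 0$). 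Upgrading mere convergence to the sharp rate $O(1/p)$ is the delicate part: one must bound uniformly the discrepancy between the lattice sum and the integral, controlling both the effect of the rounding $k_i=\4 p/d_i\3$ (or $+1$) and the codimension-one boundary contributions of these regions, exploiting that each $g_\lambda$ is piecewise polynomial of degree $d$.
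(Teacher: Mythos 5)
Your proposal follows essentially the same route as the paper: the paper's argument for Theorem~\ref{Monskyresult} is precisely the squeeze you describe, citing Gessel--Monsky for the $n$-independent sandwich (\ref{monsky-inequal1}), passing to (\ref{monsky-inequal2}) by letting $n\to\8$, and invoking their result that both outer terms equal $g(\tfrac1{d_1},\dots,\tfrac1{d_s})+O(\tfrac1p)$. The only difference is that you additionally sketch how the analytic asymptotic of $D_F$ might be established, whereas the paper simply cites \cite{Mo-unpub} for both inputs.
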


Note that, as for the case of homogeneous coordinate rings over smooth curves
in the previous section (see Corollary~\ref{maincorollary} and the discussion after it), 
the bounds on the rates of convergence of the various quantities to
$e_{\text{HK}}^{\8}(\mathfrak m,R)$ are the same. We do not know in this case 
either whether the bound $O(\frac{1}{p})$ on the speed of convergence is optimal. 

\begin{example}
\label{ex-Chang}
The diagonal hypersurface ring in Example~\ref{ex-optimal} satisfies
\[
e_{\text{HK}}(I,R) = e_{\text{HK}}^{\infty}(I,R) + O\left(\frac{1}{p^2}\right) 
\]
The same is true of the following example worked out by Chang in 
\cite{Ch} and Gessel and Monsky in \cite{Mo-unpub} using the techniques from \cite{HM}. 
For the homogeneous maximal ideal $I=(w,x,y,z)$ in the 
ring $R=\mathbb Z/p\mathbb Z[w,x,y,z]/(w^4+x^4+y^4+z^4)$, one has
\[
e_{\text{HK}}(I,R) 
=  \frac{8}{3} \left(  \frac{2p^2 \pm 2p + 3}{2p^2 \pm 2p + 1}   \right) 
\]
according as $p \equiv 1(4)$ or $p \equiv 3(4)$. Therefore, one finds that 
\[
e_{\text{HK}}(I,R) 
= \frac{8}{3} + O\left(\frac{1}{p^2}\right) 
\]
We do not know an example with the slower converge rate of $O(\frac{1}{p})$. 

\end{example}

{\bf{Limits in characteristic zero}}

Now we turn to using the results of Gessel and Monsky to examine why a certain naive
limit in characteristic zero fails to give the same answer. Given a
local (or graded) ring $R$ of equicharacteristic zero with (graded) maximal ideal $\mathfrak
m$, it might be tempting (in analogy with the definition of HK
multiplicity in positive characteristic) to take a set of generators
$x_1, \dots, x_r$ of $\mathfrak m$ and to look at the following limit (if it exists)
\[
e_{\textrm{naive}}^\8=
\lim_{N\to\8} \frac{\l(R_{\mathbb Q}/(x_1^N, \dots, x_r^N))}{N^d}
\]
Unfortunately, this can depend on the choice of generators, see Example~\ref{ex-naive2}, and even for minimal generators it does not yield $e_{\text{HK}}^{\8}(\mathfrak m,R)$ in general, see Example~\ref{ex-naive1}. 
In fact, their unpublished work \cite{Mo-unpub} 
enables one to compute this limit as well for diagonal hypersurfaces. 
Indeed, if we set 
\[
R=\frac{\mathbb Z [x_1, \dots, x_s]}{(x_1^{d_1} + \cdots + x_s^{d_s})}
\]
then by Lemma 2.2 of \cite{Mo-unpub} in view of Theorem 2.14 of \cite{HM} 
for the generators $x_1, \dots, x_s$ this limit equals the $\lambda=0$ term of $g(\frac{1}{d_1}, \dots, \frac{1}{d_s})$, 
that is
\[
e_{\textrm{naive}}^\8=
\frac{1}{2^{s-1}(s\!-\!1)!} \,g_0
\]
Therefore, whenever there are nonzero $g_\lambda$ terms 
in $g(\frac{1}{d_1}, \dots, \frac{1}{d_s})$ for some $\lambda\neq 0$, 
one might have $e_{\textrm{naive}}^\8 \neq e_{\text{HK}}^{\8}(\mathfrak m,R)$ by 
Theorem~\ref{Monskyresult}. We give explicit examples below. 

We begin with an example in which a minimal set of generators  
is used for $\mathfrak m$ in computing $e_{\textrm{naive}}^\8$ and yet one still does 
not obtain $e_{\text{HK}}^{\8}(\mathfrak m,R)$ as the limit. 
This is the ``smallest" example of which we know. 

\begin{example}
\label{ex-naive1}
In the notation above, let $s=5$ and $d_i=2$ for all $i$, that is, 
take the ring 
\[
R=\mathbb Z[x_1, \ldots, x_5]/(x_1^2 + \cdots + x_5^2)
\]
Then, writing $g_\lambda$ for 
$g_\lambda(\frac{1}{2},\frac{1}{2},\frac{1}{2},\frac{1}{2},\frac{1}{2})$, 
we have $g_\lambda =  0$ whenever $|\lambda| \geq 2$ and 
\[
g_1 = g_{-1} = 
=  \Big(   \frac{1}{2} + \frac{1}{2} + \frac{1}{2} + \frac{1}{2} + \frac{1}{2} - 2   \Big)^4
= \Big(   \frac{1}{2}   \Big)^4
\]

\noindent 
Monsky's Theorem~\ref{Monskyresult} then yields
\[
e_{\text{HK}}^{\8}(\mathfrak m,R) 
= \frac{2}{2^4 4!} \Big(  g_0 + 2\Big(   \frac{1}{2}   \Big)^4   \Big)
\]
whereas
\[
e_{\textrm{naive}}^\8
=  \frac{2}{2^4 4!}    g_0  
\]
\end{example}

Now we present a simpler example using similar ideas. It has the drawback though 
that minimial generating sets were not used when computing $e_{\textrm{naive}}^\8$. 

\begin{example}
\label{ex-naive2}
In the notation above, let $s=3$ and $d_i=1$ for all $i$, that is, 
take the ring 
\[
R=\mathbb Z[x_1, x_2, x_3]/(x_1 + x_2 + x_3)
\]
Then Theorem~2.14 of \cite{HM} shows that 
$R_{\mathbb Q}/(x_1^N,x_2^N,x_3^N)$ has dimension equal to 
$\T \frac{3}{4} N^2 \7$. (Monsky pointed out to us that this can also 
be proved by a simple argument involving a matrix of binomial 
coefficients.) So $e_{\textrm{naive}}^\8 = \frac{3}{4}$.
But, as $R$ is isomorphic to the regular ring $\mathbb Z[x_1, x_2]$, we know that 
$e_{\text{HK}}^{\8}(\mathfrak m,R)  =  1$.
\end{example}

\begin{remark}
It is interesting to compare and contrast these examples to the 
one given by Buchweitz and Chen in \cite{BC}. 
In contrast to our discussion above in characteristic 0, 
their results show that {\bf in characteristic} $p$ the naive limit 
does not even necessarily exist, even for a fixed choice of generators of the 
homogeneous maximal ideal. Specifically, for the ring 
\[
R_p=\mathbb Z/p \mathbb Z[x_1, x_2, x_3]/(x_1 + x_2 + x_3)
\]
(namely the reduction to characteristic $p$ of the ring in 
Example~\ref{ex-naive2} above) 
they show that the limit 
\[
\lim_{N\to\8} \frac{\l(R_p/(x_1^N, x_2^N, x_3^N))}{N^2}
\] 
does not exist. Indeed for the subsequence $N=p^n$ 
the limit is just the HK multiplicity, which equals 1 since $R_p$ is regular, 
but for the subsequence $N=2p^n$ the limit turns out to equal $\frac{3}{4}$
by an elementary computation. 
\end{remark}

More generally, the study in characteristic $p$ of how the length of 
\[
F[x,y]/(f^i,g^j,h^k),
\] 
where $F$ is a field, depends on $i$, $j$ and $k$ when $f$, $g$ 
and $h$ are fixed was carried out by Teixeira in his thesis \cite{Te}; 
the answer involves ``$p$-fractals".



\end{document}